\newtheorem{theorem}{Theorem}[section]
\newtheorem{corollary}[theorem]{Corollary}
\newtheorem{lemma}[theorem]{Lemma}
\newtheorem{proposition}[theorem]{Proposition}
\theoremstyle{definition}
\newtheorem{remark}{Remark}
\newcommand{\ep}{\varepsilon}
\newcommand{\bbN}{\mathbf{N}}
\newcommand{\bbR}{\mathbf{R}}
\newcommand{\bbT}{\mathbf{T}}
\newcommand{\bbZ}{\mathbf{Z}}
\newcommand{\bfN}{\mathbf{N}}
\newcommand{\bfR}{\mathbf{R}}
\renewcommand{\S}{\Sigma}
\renewcommand{\a}{\alpha}
\renewcommand{\b}{\beta}
\newcommand{\g}{\gamma}
\newcommand{\s}{\sigma}
\renewcommand{\phi}{\varphi}
\newcommand{\ol}[1]{\overline{#1}}
\renewcommand{\to}{\longrightarrow}
\title[Shmerkin-Wu theorem]{A new dynamical proof of the Shmerkin--Wu theorem}
\author{Tim Austin}
\address{Department of Mathematics, University of California, Los Angeles, Los Angeles, CA 90095-1555, USA}
\subjclass{Primary: 11K55, 37A45; Secondary: 28A50, 28A80, 37C45.}
 \keywords{Entropy, Hausdorff dimension, multiplication-invariant sets, intersections of Cantor sets, Furstenberg intersection conjecture, Shannon--McMillan--Breiman theorem}
\email{tim@math.ucla.edu}
\begin{document}
\maketitle

\thispagestyle{empty}





\begin{abstract}
Let $a < b$ be multiplicatively independent integers, both at least $2$.  Let $A,B$ be closed subsets of $[0,1]$ that are forward invariant under multiplication by $a$, $b$ respectively, and let $C := A\times B$.  An old conjecture of Furstenberg asserted that any planar line $L$ not parallel to either axis must intersect $C$ in Hausdorff dimension at most $\max\{\dim C,1\} - 1$.  Two recent works by Shmerkin and Wu have given two different proofs of this conjecture.  This note provides a third proof.  Like Wu's, it stays close to the ergodic theoretic machinery that Furstenberg introduced to study such questions, but it uses less substantial background from ergodic theory.  The same method is also used to re-prove a recent result of Yu about certain sequences of sums.
\end{abstract}

\section{Introduction}

Let $\bbT := \bbR/\bbZ$.  For any integer $a \ge 2$, we write $S_a$ for either of two transformations:
\begin{itemize}
\item $x\mapsto ax$ on $\bbT$;
\item $x\mapsto \{ax\}$ on $[0,1)$, where $\{\cdot\}$ denotes fractional part.
\end{itemize}
The obvious bijection $[0,1)\to \bbT$ identifies these two transformations, justifying the use of a single notation.  We sometimes leave the correct choice of domain to the context.  Similarly, for any $u \in \bbT$, we write $R_u$ for the rotation of $\bbT$ by $u$, or for the corresponding transformation of $[0,1)$.

Now let $a < b$ be multiplicatively independent integers, both at least $2$.  Let $A,B$ be closed subsets of $[0,1]$ that are forward invariant under $S_a$, $S_b$ respectively, and let $C := A\times B$.  Two recent papers, by Shmerkin~\cite{Shm19} and Wu~\cite{Wu19}, independently prove the following conjecture of Furstenberg~\cite{Fur70}.

\vspace{7pt}

\noindent\textbf{Theorem A} (Shmerkin--Wu theorem)\textbf{.}\ \ \emph{If $L$ is any line not parallel to either coordinate axis, then
\[\dim(C\cap L) \le \max\{\dim C,1\} - 1.\]}

\vspace{7pt}

Here and in the rest of the paper, $\dim$ denotes Hausdorff dimension. In a few places we also need upper box dimension, which is denoted by $\ol{\dim}_{\mathrm{B}}$.

Theorem A strengthens the classical slicing theorem of Marstrand, which provides the same inequality for \emph{almost} all lines when $C$ is any planar set. Furstenberg's original formulation of Theorem A concerns intersections of affine images of the sets $A$ and $B$, and for this reason it is often called Furstenberg's intersection conjecture.  It is equivalent to the above simply by writing the intersection $C\cap L$ in coordinates, as he already explains in his paper.

Both Shmerkin's and Wu's proofs of Theorem A use ergodic theory, but in very different ways.  Shmerkin's approach is quite quantitative, and sets aside most of Furstenberg's machinery from~\cite{Fur70}.  Shmerkin's main results have several other applications besides Theorem A.  Wu's work stays closer to Furstenberg's methods, but finds a way to use major additional results from abstract ergodic theory, particularly the unilateral Sinai factor theorem.

In the present note we offer a new proof of Theorem A.  It takes as its starting point one of Furstenberg's original results (Theorem~\ref{thm:Fur} below), and then uses different background from ergodic theory from Wu's.

Section~\ref{sec:Hdim} of this paper is devoted to some background results from geometric measure theory. Section~\ref{sec:Yu} gives a new proof of a recent theorem of Yu~\cite{HanYu20}.  This is not needed for the proof of Theorem A, but it offers a simple model setting to illustrate our approach.  Finally, Section~\ref{sec:A} gives the new proof of Theorem A.

\section{Some preliminaries on Hausdorff measure and dimension}\label{sec:Hdim}

Let $X$ be a metric space.  For $d \ge 0$, $\ep > 0$, and $A \subset X$, we write
\[m_d^{(\ep)}A := \inf\Big\{\sum_n (\mathrm{diam}\,F_n)^d:\ \langle F_n\rangle\ \hbox{a covering of}\ A\ \hbox{by sets of diam}\ < \ep\Big\}.\]
Then $m_d^\ast A :=\lim_{\ep \downarrow 0}m_d^{(\ep)}A$ is the \textbf{$d$-dimensional Hausdorff outer measure} of $A$.  Its restriction to the Borel sets defines a true measure, the \textbf{$d$-dimensional Hausdorff measure} $m_d$.  The properties of these measures and the resulting notion of Hausdorff dimension can be found in standard treatments such as~\cite{Fal14}.

\subsection{A generalized Marstrand theorem}\label{sec:Mar}

In the proofs below we use a generalization of Marstrand's classical slicing theorem.  I believe this generalization is widely known, but I do not know of a reference that contains the exact version we need, so I include a full proof here.  Our version is similar to~\cite[Theorem 8.1]{Hoc--notes}, but we give a more classical proof: compare, for instance,~\cite[Corollary 7.10]{Fal14}.

\begin{proposition}\label{prop:Mar}
Let $X$ and $Y$ be metric spaces and let $\mu$ be a finite Borel measure on $Y$.  Equip $X\times Y$ with the max-metric.  Assume there exists $a > 0$ such that
\[\mu B(y,r) \le r^{a - o(1)} \quad \hbox{as}\ r\downarrow 0 \quad \hbox{for}\ \mu\hbox{-a.e.}\ y,\]
where $B(y,r)$ is the open ball of radius $r$ around $y$, and where the rate implied by the notation $o(1)$ may depend on the point $y$.

Let $W \subset X\times Y$, and let $W_y := \{x:\ (x,y) \in W\}$ for each $y \in Y$. Then
\[\dim W_y + a \le \max\{\dim W,a\} \quad \hbox{for}\ \mu\hbox{-a.e.}\ y.\]
\end{proposition}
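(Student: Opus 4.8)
The plan is to reduce to the case where $W$ is contained in a product $K\times Y$ with $K$ bounded, then bound the Hausdorff measure of the slices $W_y$ by integrating against $\mu$ and applying a covering argument together with the ball-growth hypothesis on $\mu$. Fix $d > \max\{\dim W, a\}$; it suffices to show $\dim W_y \le d - a$ for $\mu$-a.e.\ $y$. Since $\dim W < d$, we have $m_d^\ast W = 0$, so for every $\ep > 0$ there is a countable cover $\langle F_n\rangle$ of $W$ by sets of diameter $< \ep$ with $\sum_n (\mathrm{diam}\,F_n)^d$ as small as we like. Because $X\times Y$ carries the max-metric, each $F_n$ is contained in a product $P_n\times Q_n$ with $\mathrm{diam}\,P_n,\mathrm{diam}\,Q_n \le \mathrm{diam}\,F_n$; enlarging if necessary we may take $Q_n$ to be a ball $B(y_n, r_n)$ with $r_n \le \mathrm{diam}\,F_n$. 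For each fixed $y$, the sets $P_n$ with $y \in Q_n$ form a cover of $W_y$ by sets of diameter $< \ep$, so
\[
m_{d-a}^{(\ep)} W_y \;\le\; \sum_{n:\, y\in Q_n} (\mathrm{diam}\,P_n)^{d-a} \;\le\; \sum_n (\mathrm{diam}\,F_n)^{d-a}\,\mathbf{1}_{Q_n}(y).
\]

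Now I would integrate this inequality in $y$ against $\mu$. The key point is that $\int \mathbf{1}_{Q_n}\,\d\mu = \mu B(y_n,r_n)$, which by hypothesis we would like to bound by $r_n^{a-o(1)} \le (\mathrm{diam}\,F_n)^{a - o(1)}$. The subtlety is that the ball-growth bound holds only $\mu$-a.e.\ and with a rate depending on the point, so it does not directly give a uniform bound on $\mu B(y_n,r_n)$ for the particular balls arising in the cover. To handle this I would first decompose $Y$ (up to a $\mu$-null set) into countably many pieces $Y = \bigcup_k Y_k$ on which the growth is uniform: by hypothesis, for $\mu$-a.e.\ $y$ there is $r_0(y) > 0$ with $\mu B(y,r) \le r^{a - \delta}$ for all $r < r_0(y)$ (for a fixed small $\delta$ with $d - a > \delta$ to be chosen, or better, running $\delta\downarrow 0$ at the end), and we may take $Y_k := \{y : r_0(y) > 1/k,\ \mu B(y,r)\le r^{a-\delta}\ \forall r<1/k\}$. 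It then suffices to prove $\dim W_y \le d - a$ for $\mu$-a.e.\ $y \in Y_k$, for each $k$. Restricting attention to covers with $\ep < 1/k$ and only counting those balls $Q_n$ centered in $Y_k$ (the others contribute nothing to slices through $Y_k$), we get $\mu B(y_n, r_n) \le r_n^{a-\delta} \le (\mathrm{diam}\,F_n)^{a-\delta}$.

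With that in hand, integrating over $y \in Y_k$ gives
\[
\int_{Y_k} m_{d-a}^{(\ep)} W_y \;\d\mu(y) \;\le\; \sum_n (\mathrm{diam}\,F_n)^{d-a}\,\mu B(y_n,r_n) \;\le\; \sum_n (\mathrm{diam}\,F_n)^{d-\delta},
\]
and since $d - \delta > \dim W$ we can make the right-hand side arbitrarily small by choice of cover. Letting $\ep\downarrow 0$ and using Fatou, $\int_{Y_k} m_{d-a}^\ast W_y\,\d\mu = 0$, so $m_{d-a}^\ast W_y = 0$ for $\mu$-a.e.\ $y\in Y_k$, hence $\dim W_y \le d - a$ there. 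Taking the union over $k$, then letting $d \downarrow \max\{\dim W,a\}$ along a countable sequence and $\delta\downarrow 0$, yields $\dim W_y + a \le \max\{\dim W, a\}$ for $\mu$-a.e.\ $y$, as required. The main obstacle, as indicated, is the non-uniformity in the ball-growth hypothesis — the rate $o(1)$ depending on $y$ — which forces the decomposition into the sets $Y_k$ and a little care in bookkeeping the covers; once that is in place the argument is the classical Fubini-type covering estimate behind Marstrand's slicing theorem. A minor additional point worth checking is measurability of $y\mapsto m_{d-a}^{(\ep)}W_y$, which follows since it is (up to the countable infimum defining it) an integral of indicator functions of Borel sets, or can be finessed by working with outer integrals throughout.
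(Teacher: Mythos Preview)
Your approach is essentially the paper's: decompose $Y$ into pieces $Y_k$ on which the Frostman bound $\mu B(y,r)\le r^{a'}$ holds uniformly for small $r$, run the classical covering/Fubini estimate on each piece, and handle the measurability of $y\mapsto m_{d-a}^{(\ep)}W_y$ by passing to the upper integral. The paper organizes this as a clean two-step argument (uniform case first, then reduction), but the content is the same.

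One slip to fix: the parenthetical ``the others contribute nothing to slices through $Y_k$'' is not correct as written. A ball $Q_n=B(y_n,r_n)$ with center $y_n\notin Y_k$ may still intersect $Y_k$, and for $y\in Q_n\cap Y_k$ the set $P_n$ is genuinely needed in the cover of $W_y$; you cannot discard it. The standard remedy, which is exactly what the paper does, is to first replace $W$ by $W\cap(X\times Y_k)$ and $\mu$ by $\mu(\,\cdot\,\cap Y_k)$. Then the $Y$-projection $G_n$ of each $F_n$ lies in $Y_k$, and if $G_n\ne\emptyset$ you may choose $y_n\in G_n\subset Y_k$, giving $\mu(G_n)\le\mu B(y_n,\mathrm{diam}\,F_n)\le(\mathrm{diam}\,F_n)^{a-\delta}$. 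With that adjustment your argument goes through; incidentally the final ``$\delta\downarrow 0$'' is then unnecessary, since for a single fixed small $\delta$ you already obtain $m_{d-a}^\ast W_y=0$.
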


\begin{proof}
\emph{Step 1.}\quad We first complete the proof under a stronger assumption: that there exists $r_0 > 0$ such that $\mu B(y,r) \le r^a$ whenever $y \in Y$ and $0 < r \le r_0$.
Let $d := \dim W$, and assume this is finite, for otherwise the result is trivial.

Now let $d' > \max\{d,a\}$, let $\ep < r_0$, and let $\langle F_n\rangle$ be a covering of $W$ by sets of diameter less than $\ep$ such that
\[\sum_n (\mathrm{diam}\,F_n)^{d'} \le \ep.\]

For each $n$, let $G_n := \ol{\{y:\ (x,y) \in F_n\ \hbox{for some}\ x\in X\}}$.  Then
\[W_y\times \{y\} \subset \bigcup_{n:\ G_n \ni y}F_n\]
for each $y \in Y$, and hence
\[m_{d'-a}^{(\ep)}(W_y) \le \sum_n1_{G_n}(y)\cdot (\mathrm{diam}\ F_n)^{d' - a}.\]
Now we integrate against $d\mu(y)$. Since $\mathrm{diam}\,G_n \le \mathrm{diam}\,F_n \le r_0$ for each $n$, our strengthened assumption on $\mu$ gives
\begin{align*}
\ol{\int} m_{d'-a}^{(\ep)}(W_y)\,d\mu(y) &\le \sum_{n}\mu G_n\cdot (\mathrm{diam}\ F_n)^{d' - a} \\
&\le \sum_{n}(\mathrm{diam}\, G_n)^a\cdot (\mathrm{diam}\ F_n)^{d' - a}\\
&\le \sum_n(\mathrm{diam}\ F_n)^{d'} \le \ep.
\end{align*}
Here we use the upper Lebesgue integral $\ol{\int}$ because we do not know that $m_{d'-a}^{(\ep)} (W_y)$ is a measurable function of $y$. Since $\ep > 0$ was arbitrary, we conclude the following for $\mu$-a.e. $y$:
\[m_{d'-a} (W_y) = 0 \quad \forall d' > \max\{d,a\}, \quad \hbox{and hence} \quad \dim W_y \le \max\{d,a\}-a.\]

\vspace{7pt}

\emph{Step 2.}\quad Now consider $\mu$ satisfying the original hypothesis.  Let $a' \in (0,a)$, and let
\[Y_m := \{y \in Y:\ \mu B(y,r) \le r^{a'}\ \forall r < 1/m\} \quad \hbox{for each}\ m \in \bfN.\]
Then $\mu(\bigcup_m Y_m) = 1$, so it suffices to prove the result when $y \in Y_m$ for some $m$. But for this purpose we may replace $Y$ with $Y_m$, $W$ with $W\cap (X\times Y_m)$, and $\mu$ with $\mu(\,\cdot\,\cap Y_m)$.  This returns us to the stronger hypothesis of Step 1, except with $a'$ in place of $a$.  So now we conclude from Step 1 that
\[\dim W_y + a' \le \max\{\dim W,a'\}\]
for a $\mu$-a.e. $y$.  Now let $a'$ increase through a sequence of values to $a$.
\end{proof}

\subsection{Adic versions of the mass distribution principle}\label{sec:MDP}

The mass distribution principle is one of the most basic and versatile sources of lower bounds on Hausdorff measure and dimension.  Standard formulations can be found in~\cite[Section 4.1]{Fal14} or~\cite[Proposition 4.2]{Hoc--notes}.

For one- and two-dimensional Euclidean subsets, we need some versions of this principle with adic intervals taking the place of balls.  Such variants are well-known, but we include the precise statements we need for completeness.

Let $a$ and $n$ be integers with $a\ge 2$.  An \textbf{$a$-adic interval of depth $n$} is a real interval of the form $[ka^{-n},(k+1)a^{-n})$ for some $k\in \bbZ$. For any $x \in \bfR$, let $I_n(x)$ denote the $a$-adic interval of depth $n$ that contains $x$.  If we fix $n \ge 0$ and let $x$ vary in $[0,1)$, then the sets $I_n(x)$ constitute a partition of $[0,1)$.  These partitions become finer as $n$ increases.  They can serve as substitutes for centred intervals in the mass distribution principle:

\begin{lemma}\label{lem:fromHoc}
Let $\mu$ be a finite Borel measure on $[0,1)$, and let $d \ge 0$.  Then the following are equivalent:
\begin{enumerate}
\item[a.] $\mu (z-r,z+r) \le r^{d - o(1)}$ as $r\downarrow 0$ for $\mu$-a.e. $z$, where the rate implied by the notation $o(1)$ may depend on $z$;
\item[b.] $\mu I_n(z) \le a^{-dn + o(n)}$ as $n\to\infty$ for $\mu$-a.e. $z$, where the rate implied by the notation $o(n)$ may depend on $z$.
\end{enumerate}
If these conditions hold, then any Borel set with positive $\mu$-measure has Hausdorff dimension at least $d$. \qed
\end{lemma}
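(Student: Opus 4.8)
The forward implication a.$\,\Rightarrow\,$b.\ is immediate: $I_n(z)$ is an interval of length $a^{-n}$ containing $z$, so $I_n(z)\subseteq(z-a^{-n},z+a^{-n})$, and putting $r=a^{-n}$ into a.\ gives $\mu I_n(z)\le (a^{-n})^{d-o(1)}=a^{-dn+o(n)}$ for $\mu$-a.e.\ $z$. For the reverse implication the plan is to use the complementary elementary fact that if $a^{-n-1}\le r<a^{-n}$ then the ball $(z-r,z+r)$ is covered by $I_n(z)$ together with its two neighbouring $a$-adic intervals of depth $n$, which I write $I_n(z)^-$ and $I_n(z)^+$. Given b., the middle term $\mu I_n(z)$ is controlled for $\mu$-a.e.\ $z$, so the whole content lies in bounding the two neighbours. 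This is the step I expect to be the main obstacle: although any depth-$n$ adic interval $N$ with $\mu N>0$ equals $I_n(w)$ for $\mu$-a.e.\ $w\in N$, so that $\mu N\le a^{-dn+o(n)}$ by b., the rate hidden in this $o(n)$ depends on $w$, and the relevant $w$ varies with $n$.

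I would deal with this by an Egorov-type truncation. Fix $\ep>0$ and set $G_K:=\{z:\ \mu I_n(z)\le a^{-(d-\ep)n}\ \text{for all}\ n\ge K\}$; by b.\ these sets increase with $K$ to one of full $\mu$-measure. For $n\ge K$ every depth-$n$ adic interval either meets $G_K$, and then has $\mu$-measure at most $a^{-(d-\ep)n}$, or is $\mu$-essentially contained in $G_K^c$. Running a Borel--Cantelli argument on this dichotomy — using injectivity of the two neighbour maps on depth-$n$ intervals and the crude count $\#\{N:\mu N>a^{-(d-\ep)n}\}\le\|\mu\|\,a^{(d-\ep)n}$ to absorb the ``heavy'' neighbours, and keeping $K$ free so that $\mu(G_K^c)\to 0$ swallows the remaining error — one arrives, after some bookkeeping, at: for $\mu$-a.e.\ $z$, both $\mu I_n(z)^\pm\le a^{-(d-\ep)n}$ for all large $n$. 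With the covering above and b.\ for $\mu I_n(z)$ this gives $\mu(z-r,z+r)\le 3a^{-(d-\ep)n}\le r^{\,d-2\ep}$ for all small $r$, and $\ep\downarrow 0$ yields a. (Alternatively, a.$\,\Leftrightarrow\,$b.\ is the familiar statement that the lower local dimension of a measure on $[0,1)$ may be computed with $a$-adic intervals instead of balls, and could be quoted from~\cite{Hoc--notes}.)

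For the last assertion I would bypass a.\ and argue straight from b.\ via the adic mass distribution principle. Let $F$ be Borel with $\mu F>0$ and fix $\ep>0$; then $\mu(F\cap G_K)>0$ for some $K$, and $\nu:=\mu|_{F\cap G_K}$ satisfies $\nu I_n(z)\le a^{-(d-\ep)n}$ for \emph{every} $z$ and every $n\ge K$ (evaluate b.\ at a point of $I_n(z)\cap F\cap G_K$ when this is nonempty, and note $\nu I_n(z)=0$ otherwise). Because a set of diameter less than $a^{-n}$ meets at most two consecutive depth-$n$ adic intervals, this uniform bound upgrades to $\nu E\le c_\ep(\mathrm{diam}\,E)^{d-\ep}$ for all $E$ of small enough diameter; hence $\sum_k(\mathrm{diam}\,F_k)^{d-\ep}\ge \nu F/c_\ep>0$ for every cover $\langle F_k\rangle$ of $F$ by small sets, so $m_{d-\ep}(F)>0$ and $\dim F\ge d-\ep$. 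Letting $\ep\downarrow 0$ finishes the proof.
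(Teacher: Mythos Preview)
The paper does not prove this lemma at all: it simply cites \cite[Proposition~6.21]{Hoc--notes} for the equivalence of (a) and (b) and invokes the mass distribution principle for the final assertion. Your proposal therefore goes well beyond the paper by supplying an elementary argument, and you even note the citation as an alternative. The implication a.\,$\Rightarrow$\,b.\ and the final mass-distribution step are correct as written.

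There is, however, a genuine gap in your sketch of b.\,$\Rightarrow$\,a. With a \emph{single} parameter $\ep$ used both to define $G_K$ and to set the ``heavy'' threshold, the Borel--Cantelli bound does not close. Concretely: for $n\ge K$ and $z\in G_K$, if $I_n(z)^+$ is heavy then (since heavy intervals miss $G_K$) we only get $\mu I_n(z)^+\le\mu(G_K^c)$, a constant in $n$; and if instead you try to sum $\mu(B_n\cap G_K)$ where $B_n=\{z:\mu I_n(z)^+>a^{-(d-\ep)n}\}$, the estimate $|H_n|\cdot a^{-(d-\ep)n}\le\|\mu\|$ is not summable. The phrase ``keeping $K$ free so that $\mu(G_K^c)\to 0$ swallows the remaining error'' does not repair this, because the error is constant in $n$ for each fixed $K$.

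The fix is small: use two scales. Define $G_K$ with parameter $\ep'<\ep$ (say $\ep'=\ep/2$), and keep the heavy threshold at $\ep$. Then for $z\in B_n\cap G_K$ with $n\ge K$ one has $\mu I_n(z)\le a^{-(d-\ep')n}$, whence
\[
\mu(B_n\cap G_K)\ \le\ |H_n|\cdot a^{-(d-\ep')n}\ \le\ \|\mu\|\,a^{-(\ep-\ep')n},
\]
which \emph{is} summable, and Borel--Cantelli gives $\mu I_n(z)^+\le a^{-(d-\ep)n}$ eventually for $\mu$-a.e.\ $z\in G_K$. Letting $K\to\infty$ and then $\ep\downarrow 0$ completes the argument. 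With this adjustment your proof is correct and more self-contained than the paper's bare citation.
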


The equivalence of (a) and (b) is a special case of~\cite[Proposition 6.21]{Hoc--notes}, and the final implication is the mass distribution principle.

Now let $b\ge 2$ be another integer, and let $J_n(y)$ denote the $b$-adic interval of depth $n$ that contains a point $y \in \bbR$.  Let $u := \log a/\log b$.  Then, for each $n \in \bbN$ and $(x,y) \in \bfR^2$, let
\begin{equation}\label{eq:Dn}
Q_n(x,y) := I_n(x)\times J_m(y), \quad \hbox{where}\ m = \lfloor u n\rfloor.
\end{equation}
This choice of $m$ is the largest integer for which $b^{-m} \ge a^{-n}$. As a result, the rectangle $Q_n(x,y)$ is always close to being square: its height is at least its width, but no more than $b$ times its width.

If we fix $n$ and let $(x,y)$ vary in $[0,1)^2$, then the sets $Q_n(x,y)$ constitute a partition of $[0,1)^2$.  These partitions become finer as $n$ increases.  They participate in a two-dimensional variant of the mass distribution principle:

\begin{lemma}\label{lem:fromHoc2}
Let $\mu$ be a finite Borel measure on $[0,1)^2$, and let $d \ge 0$.  Then the following are equivalent:
\begin{enumerate}
\item[a.] $\mu B(z,r) \le r^{d - o(1)}$ as $r\downarrow 0$ for $\mu$-a.e. $z$, where the rate implied by the notation $o(1)$ may depend on $z$;
\item[b.] $\mu Q_n(z) \le a^{-dn + o(n)}$ as $n\to\infty$ for $\mu$-a.e. $z$, where the rate implied by the notation $o(n)$ may depend on $z$.
\end{enumerate}
If these conditions hold, then any Borel set with positive $\mu$-measure has Hausdorff dimension at least $d$. \qed
\end{lemma}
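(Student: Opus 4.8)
The plan is to reduce the equivalence of (a) and (b) to the one-dimensional Lemma~\ref{lem:fromHoc} by recording that each cell $Q_n$ is squeezed between two concentric balls of comparable radius, and then to prove the final clause directly from (b) by the mass distribution principle in adic form. Since $m=\lfloor un\rfloor$ is the largest integer with $b^{-m}\ge a^{-n}$, we also have $b^{-m}<ba^{-n}$, so $Q_n(z)=I_n(x)\times J_m(y)$ has width $a^{-n}$ and height in $[a^{-n},ba^{-n})$. In the max-metric this gives
\[
z\in Q_n(z)\subseteq B(z,ba^{-n})\qquad (z\in[0,1)^2,\ n\ge 1),
\]
and conversely any $S\subseteq[0,1)^2$ with $\mathrm{diam}\,S<a^{-n+1}$ is covered by at most $C=C(a,b)$ cells of the depth-$n$ partition $\mathcal{Q}_n:=\{Q_n(w):w\in[0,1)^2\}$, because each coordinate projection of $S$ has diameter $<a\cdot a^{-n}\le a\cdot b^{-m}$ and hence meets only boundedly many of the relevant adic intervals.

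With these two facts in hand, the $\mathcal{Q}_n$ form a refining sequence of partitions into ``approximate balls of radius $a^{-n}$'', and the equivalence of (a) and (b) follows exactly as for Lemma~\ref{lem:fromHoc}, being the instance of \cite[Proposition 6.21]{Hoc--notes} for this sequence. In the easy direction the displayed inclusion already gives, for $\mu$-a.e.\ $z$,
\[
\mu Q_n(z)\le\mu B(z,ba^{-n})\le(ba^{-n})^{d-o(1)}=a^{-dn+o(n)},
\]
so (a)$\Rightarrow$(b). The converse also uses the covering bound, to pass from control of all cells near a point to control of a ball around it. I expect this passage to be the one genuinely delicate step: a metric ball can straddle the boundaries of $\mathcal{Q}_n$ and need not lie inside a single comparable cell, so the boundary effect has to be absorbed (here and in \cite{Hoc--notes}) into the estimate ``$S$ meets at most $C$ cells of $\mathcal{Q}_n$'' rather than treated by a naive covering.

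For the final assertion I argue directly from (b). We may assume $d>0$, and then (b) forces $\mu$ to be non-atomic: if $\mu\{z_0\}>0$ then $\mu Q_n(z_0)\ge\mu\{z_0\}$ for all $n$, contradicting (b) at $z_0$. Fix $\delta\in(0,d)$ and, for $j\ge1$, let
\[
E_j:=\{z\in E:\ \mu Q_n(z)\le a^{-(d-\delta)n}\text{ for all }n\ge j\},
\]
a Borel set. By (b), $\mu$-a.e.\ $z\in E$ satisfies $\mu Q_n(z)\le a^{-dn+o(n)}\le a^{-(d-\delta)n}$ for all large $n$, so the increasing sets $E_j$ exhaust $E$ up to a $\mu$-null set and $\mu E_{j_0}>0$ for some $j_0$. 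Let $\langle F_k\rangle$ be any covering of $E_{j_0}$ by closed sets of diameter $<a^{-j_0}$; discard those disjoint from $E_{j_0}$, and for the rest set $r_k:=\mathrm{diam}\,F_k>0$ and let $n_k$ be least with $a^{-n_k}\le r_k$, so $n_k>j_0$ and $r_k<a^{-n_k+1}$. Any cell of $\mathcal{Q}_{n_k}$ meeting $F_k\cap E_{j_0}$ has the form $Q_{n_k}(w)$ with $w\in E_{j_0}$, hence, since $n_k\ge j_0$, has $\mu$-measure at most $a^{-(d-\delta)n_k}=(a^{-n_k})^{d-\delta}\le r_k^{d-\delta}$; and by the covering bound there are at most $C$ of them. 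As $F_k\cap E_{j_0}$ lies in their union,
\[
\mu(F_k\cap E_{j_0})\le C\,r_k^{d-\delta}=C\,(\mathrm{diam}\,F_k)^{d-\delta}.
\]
Summing over $k$ gives $\mu E_{j_0}\le C\sum_k(\mathrm{diam}\,F_k)^{d-\delta}$, hence $m_{d-\delta}(E_{j_0})\ge\mu E_{j_0}/C>0$, and therefore $\dim E\ge\dim E_{j_0}\ge d-\delta$. Letting $\delta\uparrow d$ gives $\dim E\ge d$.
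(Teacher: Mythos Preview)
Your proposal is correct and follows essentially the same route as the paper: both observe that the rectangles $Q_n$ have uniformly bounded aspect ratios and invoke \cite[Proposition~6.21]{Hoc--notes} for the equivalence of (a) and (b), with the final clause being the mass distribution principle. The paper leaves it at that remark, whereas you spell out the adic mass distribution argument in detail (noting only that you use $E$ for the given Borel set of positive measure without saying so explicitly).
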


This time the equivalence of (a) and (b) is not quite a special case of~\cite[Proposition 6.21]{Hoc--notes}, because that reference concerns true $b$-adic squares.  However, the proof requires only that the rectangles $Q_n(x,y)$ have uniformly bounded aspect ratios, so it carries over without change.

\section{A theorem of Yu}\label{sec:Yu}

Before approaching Theorem A, we give a new proof of a recent theorem of Yu~\cite{HanYu20}:

\begin{theorem}\label{thm:Yu}
Let $u \in \bbT$ be irrational, let $a \ge 2$ be an integer, and let $v \in \bbT$ be arbitrary.  Then the closure of the set $\{nu + a^nv:\ n\in\bbN_0\}$ has Hausdorff dimension $1$.
\end{theorem}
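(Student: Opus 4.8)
The plan is to realize the closure in question as the image, under the coordinate-sum map, of an orbit closure in $\bbT^2$ for the skew product of the irrational rotation $R_u$ with $S_a$, and then to build on it a measure of full dimension by combining the unique ergodicity of $R_u$, Furstenberg's Theorem~\ref{thm:Fur}, and the adic mass distribution principle of Lemma~\ref{lem:fromHoc}. Concretely, write $x_n := nu + a^nv \bmod 1$, set
$K := \overline{\{(nu \bmod 1,\ a^nv \bmod 1):\ n \in \bbN_0\}} \subseteq \bbT^2$,
and let $p:\bbT^2 \to \bbT$ be $p(s,t) = s+t$, so that the set in the theorem is $E := p(K)$ and $\dim E \le 1$ is automatic. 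Since $p$ is Lipschitz, $K$ is forward-invariant under $T := R_u \times S_a$ (it is the closure of the forward $T$-orbit of $(0,v)$), and $K$ surjects onto $\bbT$ in the first coordinate because $u$ is irrational. The content is the lower bound $\dim E \ge 1$, and for that it suffices to exhibit a Borel probability measure on $E$ whose lower Hausdorff dimension is $1$.

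To produce such a measure, I would take a weak-$*$ limit point $\lambda$ of the empirical measures $\frac1N\sum_{n=0}^{N-1}\delta_{T^n(0,v)}$; this $\lambda$ is a $T$-invariant Borel probability measure supported on $K$, which may be assumed ergodic after passing to an ergodic component. Its first-coordinate marginal is $R_u$-invariant, hence equals Lebesgue measure on $\bbT$, because an irrational rotation is uniquely ergodic. Set $\theta := p_*\lambda$, a Borel probability measure on the closed set $E$. By Lemma~\ref{lem:fromHoc}, applied with the defining integer of the adic intervals equal to $a$ and with $d = 1$, it is enough to show $\theta I_n(z) \le a^{-n + o(n)}$ as $n \to \infty$ for $\theta$-a.e.\ $z$.

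Unwinding the definitions, $\theta I_n(z)$ is the $\lambda$-mass of a width-$a^{-n}$ ``diagonal strip'' $\{(s,t):\ s+t \in I_n(z)\}$. Disintegrating $\lambda$ over its second coordinate, this mass is the average over the second marginal of the masses that the first-coordinate conditional measures of $\lambda$ assign to intervals of length $a^{-n}$, and those conditionals themselves average to Lebesgue. So what must be ruled out is that the $T$-invariant $\lambda$ concentrates its (Lebesgue) mass in the rotation direction onto thin fibres over the $S_a$-coordinate: one needs the first-coordinate conditionals to be ``full-dimensional'', giving every $a^{-n}$-interval mass at most $a^{-n+o(n)}$ almost surely. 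This is exactly where Theorem~\ref{thm:Fur} enters: the multiplicative ($\times a$) structure of the second coordinate is dimensionally disjoint from the rotation factor, so no such concentration can occur. Granting this, $\theta$ has dimension $1$, hence $\dim E \ge 1$, and the theorem follows.

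The main obstacle is precisely this last step: converting the qualitative fact that the rotation direction is not absorbed by the $S_a$-system into the quantitative estimate that the relevant conditional measures put only $a^{-n+o(n)}$ mass on $a^{-n}$-intervals. I expect to run this through the Shannon--McMillan--Breiman theorem for $(\bbT^2,T,\lambda)$ relative to its rotation factor, using Theorem~\ref{thm:Fur} to bound the relevant relative entropy below by $\log a$, and then transferring this to the diagonal strips; keeping the ergodic-theoretic input as light as promised in the introduction is the delicate part. A secondary, harmless subtlety is that the second marginal of $\lambda$ need not be any canonical measure on $\overline{\{a^nv\}}$ — but this does not matter, since only the first marginal and the joining structure of $\lambda$ are used.
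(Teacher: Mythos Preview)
Your setup matches the paper's --- the orbit closure $K$ under $R_u\times S_a$ and the sum map $p$ --- but the core step has a genuine gap. You aim to show that the first-coordinate conditionals of $\lambda$ over $\pi_2$ each have dimension $1$, and you plan to extract this from a relative-entropy lower bound of $\log a$, citing Theorem~\ref{thm:Fur}. Two things fail. First, Theorem~\ref{thm:Fur} in this paper is Furstenberg's result about line slices of $A\times B$ and the auxiliary skew product on $C\times\bbT$; it says nothing about the entropy of $(R_u\times S_a)$-invariant measures and plays no role in the proof of Theorem~\ref{thm:Yu}. Second, and more damaging, the relative entropy of $(\bbT^2,\lambda,T)$ over its rotation factor equals the full Kolmogorov--Sinai entropy $h(\lambda,T)=h(\pi_{2\ast}\lambda,S_a)$, which is at most the topological entropy of $(\pi_2[K],S_a)$, namely $(\dim\pi_2[K])\cdot\log a$. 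Whenever $\dim\pi_2[K]<1$ this is strictly below $\log a$, so the bound you need is simply false. There is also a mismatch of factors: relative Shannon--McMillan--Breiman over the rotation factor controls the \emph{second}-coordinate conditionals (fibres over $\pi_1$), not the first-coordinate conditionals over $\pi_2$ that your diagonal-strip estimate requires.

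The paper's route avoids asking whether $p_\ast\lambda$ has dimension $1$. It chooses $\mu$ to lift a measure of maximal entropy on $\pi_2[K]$, so that relative Shannon--McMillan--Breiman gives the fibres $K_s=\{y:(s,y)\in K\}$ dimension at least $d:=\dim\pi_2[K]$ for $m$-a.e.\ $s$. Then Proposition~\ref{prop:Mar} applied to $\pi_1$ yields $d+1\le\dim K$, while the bi-Lipschitz map $(p,\pi_2)$ and the product formula give $\dim K\le\dim p[K]+\ol{\dim}_{\mathrm{B}}\pi_2[K]=\dim p[K]+d$. The $d$'s cancel, leaving $\dim p[K]\ge 1$. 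The point is not to force full-dimensional conditionals, but to let them have whatever dimension $d$ the $S_a$-factor permits and arrange for $d$ to cancel between a lower bound and an upper bound on $\dim K$.
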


Yu proves this result by an adaptation of Wu's chief innovation in~\cite{Wu19}, which is a certain application of the unilateral Sinai factor theorem referred to as the `Bernoulli decomposition method'.  Our next topic is a new, shorter proof of Theorem~\ref{thm:Yu} which avoids this machinery.  We include it here as a warmup to the coming proof of Theorem A.

As remarked in his paper, Yu's method really proves the following.  Let $\s:\bbT\times \bbT\to \bbT$ be the map $(u,v)\mapsto u+v$.

\begin{theorem}\label{thm:truYu}
If $C \subset \bbT\times \bbT$ is nonempty, closed, and forward invariant under $R_u\times S_a$, then $\dim \s[C] = 1$.
\end{theorem}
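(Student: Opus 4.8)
The plan is to show $\dim \s[C] = 1$ by contradiction: suppose $\dim \s[C] = 1 - \delta$ for some $\delta > 0$, and derive a contradiction by exploiting the conditional structure of an $R_u \times S_a$-invariant measure on $C$ over the first ($R_u$) coordinate. The key point is that $\s$ is, on each fiber $\{u\}\times\bbT$, just the translation $v \mapsto u + v$, which preserves one-dimensional Hausdorff measure; so $\dim \s[C]$ controls the dimension of the fibers $C_u := \{v : (u,v)\in C\}$, while the $S_a$-invariance of those fibers is what we can hope to contradict via Furstenberg's theorem (Theorem~\ref{thm:Fur}) on the scarcity of $S_a$-invariant closed sets of small dimension, or via a CP-chain / entropy argument.

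First I would choose, by the variational principle, an $R_u \times S_a$-invariant ergodic measure $\mu$ supported on $C$. Disintegrate $\mu = \int \mu_u \, d(\pi_*\mu)(u)$ over the first coordinate; since $\pi_*\mu$ is $R_u$-invariant and $u$ is irrational, $\pi_*\mu$ is Lebesgue measure on $\bbT$, and the map $u \mapsto \mu_u$ is $R_u$-equivariant in the sense that $(S_a)_*$ does not directly act here — rather, the skew structure $(R_u \times S_a)_*\mu = \mu$ forces $\mu_{u+u} $-type relations. The crucial consequence I want is that for a.e.\ $u$, the measure $\s_*\mu_u$ (a translate of $\mu_u$) has a well-defined local dimension, and integrating over $u$ gives that $\dim \s[C] \ge \operatorname{ess\,sup}_u \dim \mu_u$ via Proposition~\ref{prop:Mar} applied with $Y = \bbT$, $\mu = $ Lebesgue (so $a = 1$ there), $X = \bbT$, and $W$ the graph-like set $\{(v, u) : u + v \in \s[C]\}$ — or more directly, since $\s$ restricted to a fiber is an isometry, $\dim \s[C] \ge \dim C_u$ for every $u$, hence $\dim C_u \le 1 - \delta$ for all $u$.

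So now every fiber $C_u$ is a closed subset of $\bbT$ with $\dim C_u \le 1 - \delta$. The remaining task — and this is where the real work lies — is to show that this is incompatible with $C$ being nonempty and $R_u\times S_a$-invariant. Here I would invoke the $S_a$-structure: iterating $R_u \times S_a$ shows that $C_{u + nu} \supseteq S_a(C_u)$ for all $n$, and by minimality of the irrational rotation together with upper semicontinuity of the fiber map (which holds since $C$ is closed), the fibers $C_u$ are "all essentially the same up to $S_a$" — in particular $\bigcup_n S_a^n(C_u)$ has closure contained in fibers of uniformly bounded dimension, yet $S_a$-invariant closed sets of dimension $< 1$ are severely constrained. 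Concretely, I expect to build from $\mu$ an $S_a$-invariant measure $\nu$ on $\bbT$ (e.g.\ a weak-$*$ limit of $\frac1N\sum_{n<N} (S_a^n)_* \mu_{u_0}$, using that these push forward consistently) whose dimension is at most $1 - \delta$, and then Furstenberg's theorem forces $\nu$ to be supported on a proper closed $S_a$-invariant set. The contradiction should come from the fact that the \emph{image} $\s[C]$ must then be much smaller than dimension $1$ unless $\mu_u$ are essentially Lebesgue-like — i.e.\ one plays off "$\s$ cannot increase dimension" against "on the rotation side there is full entropy". The main obstacle I anticipate is making the disintegration argument genuinely yield an $S_a$-invariant measure (or CP-chain) on $\bbT$ with controlled dimension: the skew-product coupling means $\mu_u$ is not itself $S_a$-invariant, only that the family transforms equivariantly, so one must pass to a limit along the rotation orbit and check that dimension is not lost in the limit — this is precisely the step where Wu needed the Sinai factor theorem, and where the note presumably substitutes a more elementary Shannon–McMillan–Breiman / mass-distribution estimate in the spirit of Lemmas~\ref{lem:fromHoc} and~\ref{lem:fromHoc2}.
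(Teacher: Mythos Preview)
Your proposal has a genuine gap: the contradiction you are aiming for never materializes.

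The fiberwise observation is correct and trivial: since $\s$ restricted to $\{t\}\times\bbT$ is the isometry $v\mapsto t+v$, one has $t+C_t\subset\s[C]$ and hence $\dim C_t\le\dim\s[C]$ for every $t$. So if $\dim\s[C]=1-\delta$, all fibers satisfy $\dim C_t\le 1-\delta$. But this alone is not incompatible with anything: take $C=\bbT\times K$ for any closed $S_a$-invariant $K$ with $\dim K<1$. Your subsequent plan --- to build an $S_a$-invariant measure $\nu$ of dimension at most $1-\delta$ and then ``invoke Furstenberg'' --- yields no contradiction, because $S_a$-invariant measures of dimension strictly below $1$ exist in abundance. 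Saying $\nu$ is supported on a proper closed $S_a$-invariant set is a true statement, not an absurdity. So the proof sketch stops exactly where it would need a new idea.

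What the paper actually does is nearly the reverse of your direction. It chooses $\mu$ not arbitrarily but so that $\nu=\pi_{2\ast}\mu$ has \emph{maximal} entropy $h$ on $\pi_2[C]$; the relative Shannon--McMillan--Breiman theorem then gives a \emph{lower} bound $\dim C_t\ge d:=h/\log a$ for a.e.\ $t$, not an upper bound. Proposition~\ref{prop:Mar} (applied with Lebesgue measure on the first factor) turns this into $d+1\le\dim C$. The decisive step you are missing is the upper bound on $\dim C$: the map $(\s,\pi_2)$ is bi-Lipschitz, so
\[
\dim C=\dim\big((\s,\pi_2)[C]\big)\le\dim\big(\s[C]\times\pi_2[C]\big)\le\dim\s[C]+\ol{\dim}_{\mathrm{B}}\pi_2[C]=\dim\s[C]+d,
\]
using the product formula and the exact-dimensionality of the $S_a$-invariant set $\pi_2[C]$. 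Concatenating with $d+1\le\dim C$ cancels $d$ and gives $\dim\s[C]\ge 1$ directly. The argument is not by contradiction, and no limit of pushforwards or CP-chain construction is needed.
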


This implies Theorem~\ref{thm:Yu} by letting $C$ be the forward orbit closure of the point $(0,v)$ under $R_u \times S_a$.

Let $\pi_i$ for $i=1,2$ denote the first and second coordinate projections $\bbT\times \bbT\to \bbT$.  Since $C$ is closed and forward invariant under $R_u \times S_a$, its image $\pi_2[C]$ is closed and forward invariant under $S_a$.  Let $h$ be the topological entropy of the topological dynamical system $(\pi_2[C],S_a)$. A classical calculation of Furstenberg~\cite[Proposition III.1]{Fur67} gives
\[\dim \pi_2 [C] = \ol{\dim}_{\mathrm{B}} \pi_2[C] = \frac{h}{\log a}.\]
We denote this value by $d$ in the remainder of this section.

\begin{lemma}\label{lem:good-meas}
The set $C$ carries a Borel probability measure $\mu$ which is invariant and ergodic under $R_u\times S_a$ and such that $(\bbT,\pi_{2\ast}\mu,S_a)$ has Kolmogorov--Sinai entropy equal to $h$.
\end{lemma}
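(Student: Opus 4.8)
The plan is to produce $\mu$ by the standard variational-principle recipe: pick a measure on $\pi_2[C]$ that achieves the topological entropy $h$ of $(\pi_2[C],S_a)$, and then lift it to an invariant measure on $C$ without increasing or decreasing the entropy of its second marginal. Concretely, first I would invoke the variational principle to choose an $S_a$-invariant ergodic Borel probability measure $\nu$ on $\pi_2[C]$ with Kolmogorov--Sinai entropy $h_\nu(S_a)=h$ (ergodicity can be arranged because the measures of maximal entropy form a face of a simplex, so the ergodic ones among them are nonempty, or simply by an ergodic decomposition argument since the entropy function is affine). Then I would lift $\nu$ through the factor map $\pi_2\colon (C,R_u\times S_a)\to(\pi_2[C],S_a)$: since $\pi_2$ is a continuous surjection between compact systems and $\nu$ is $S_a$-invariant, the set of $(R_u\times S_a)$-invariant Borel probability measures $\lambda$ on $C$ with $\pi_{2\ast}\lambda=\nu$ is a nonempty compact convex set (nonemptiness is a routine weak-$\ast$ compactness / Krylov--Bogolyubov argument applied to Cesàro averages of any measure projecting to $\nu$). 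Taking an extreme point of this set yields a measure $\mu$ that is ergodic under $R_u\times S_a$, and by construction $\pi_{2\ast}\mu=\nu$ has KS-entropy $h$.

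The one subtlety is ergodicity of the extreme point: an extreme point of the set of invariant lifts of a \emph{fixed} ergodic base measure need not be ergodic for the total system in general. I would handle this by instead taking the ergodic decomposition of \emph{some} invariant lift $\lambda_0$ of $\nu$; almost every ergodic component $\mu$ then still satisfies $\pi_{2\ast}\mu = \nu$, because $\nu$ is itself ergodic and hence not a nontrivial average of distinct $S_a$-invariant measures, so the pushforwards of the components must all agree with $\nu$. This gives an ergodic $\mu$ with the required second marginal, which is exactly the statement.

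The step I expect to be the main obstacle — really the only non-formal input — is the appeal to the variational principle to realize the topological entropy $h$ as the KS-entropy of an \emph{ergodic} measure on $\pi_2[C]$; everything else is weak-$\ast$ compactness, convexity, and ergodic decomposition. Since $(\pi_2[C],S_a)$ is an expansive system (it is a subshift in disguise, via the base-$a$ coding used in Furstenberg's dimension formula), a measure of maximal entropy exists, so this is standard; if one wants to avoid even the general variational principle, one can use the base-$a$ symbolic coding of $\pi_2[C]$ directly and build the maximal-entropy measure combinatorially, but invoking the variational principle is cleaner. I would present the argument in that order: (1) choose ergodic maximal-entropy $\nu$ on $\pi_2[C]$; (2) lift to an invariant $\lambda_0$ on $C$ over $\nu$; (3) pass to an ergodic component $\mu$ of $\lambda_0$ and check $\pi_{2\ast}\mu=\nu$.
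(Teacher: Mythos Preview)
Your proposal is correct and follows essentially the same three-step approach as the paper: pick an ergodic measure of maximal entropy $\nu$ on $\pi_2[C]$ (using expansivity of $S_a$), then take an ergodic $(R_u\times S_a)$-invariant lift of $\nu$ to $C$. The paper's version is terser---it simply asserts the existence of an ergodic lift---while you spell out the lifting and ergodic-decomposition details; your worry about extreme points of the fibre of lifts is in fact unnecessary (that fibre is a face of the simplex of invariant measures on $C$ precisely because $\nu$ is ergodic, so its extreme points are already ergodic), but your workaround via ergodic components is equally valid.
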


\begin{proof}
Since $S_a$ is expansive, the topological dynamical system $(\pi_2[C],S_a)$ has a measure of maximal entropy $\nu$, whose Kolmogorov--Sinai entropy therefore equals $h$.  Replacing $\nu$ with one of its ergodic components if necessary, we may assume it is ergodic.  Let $\mu$ be any $(R_u\times S_a)$-invariant and ergodic lift of this measure to $C$.
\end{proof}

We fix the measures $\mu$ and $\nu = \pi_{2\ast}\mu$ for the rest of this section.  The image of $\mu$ under $\pi_1$ is an invariant measure for $R_u$, so must equal Lebesgue measure $m$, by unique ergodicity.  In addition, let
\[\mu = \int_\bbT \delta_t \times \nu_t\,dt\]
be the disintegration of $\mu$ over $\pi_1$.

Since $(\bbT,m,R_u)$ has entropy zero, $h$ is also equal to
\begin{itemize}
\item the Kolmogorov--Sinai entropy of $(\bbT^2,\mu,R_u\times S_a)$, and
\item the relative Kolmogorov--Sinai entropy of $(\bbT^2,\mu,R_u\times S_a)$ over $\pi_1$ (see~\cite[Subsection 2.3]{EinLinWar--draft} for the definition of relative Kolmogorov--Sinai entropy over a factor, noting that those authors call it `conditional entropy').
\end{itemize}

Let $\S_a := {\{0,1,\dots,a-1\}}$, and let $\a:\bbT\to \S_\a$ be the map such that $\a(x)$ is the first digit in the $a$-ary expansion of $x$.  The sequence of observables $\a_n := \a\circ S_a^n$, $n= 0,1,2,\dots$, generates the whole Borel sigma-algebra of $\bbT$, so by the Kolmogorov--Sinai theorem the resulting partitions of $\bbT$ witness the full Kolmogorov--Sinai entropy of the system $(\pi_2[C],\nu,S_a)$.

By ergodicity and the relative Shannon--McMillan--Breiman theorem~\cite[Theorem 3.2]{EinLinWar--draft}, these entropy values imply that
\begin{equation}\label{eq:relSMB1}
\nu_t I_n(x) = e^{-hn + o(n)}  = (a^{-n})^{d+ o(1)} \quad \hbox{as}\ n\to\infty
\end{equation}
for $m$-a.e. $t$ and then for $\nu_t$-a.e. $x$, where the rates implied by the little-$o$ notation may depend on $t$ and $x$.  Note that~\cite[Theorem 3.2]{EinLinWar--draft} may be applied here because the factor system is simply a circle rotation, hence invertible, even though the full system $(\bbT^2,\mu,R_u\times S_a)$ is typically not invertible.

The asymptotic~\eqref{eq:relSMB1} is the starting point of our dimension estimates, via Lemma~\ref{lem:fromHoc}.

\begin{proof}[Proof of Theorem~\ref{thm:truYu}]
Let $C_t = \{y \in \bbT:\ (t,y) \in C\}$, and observe that
\[1 = \mu C = \int_\bbT \nu_t C_t\,dt,\]
and so $\nu_t C_t = 1$ for a.e. $t$. Therefore $\dim C_t \ge d$ for a.e. $t$, by~\eqref{eq:relSMB1} and Lemma~\ref{lem:fromHoc}.

Since $\pi_1[C]$ is nonempty, closed, and forward invariant under $R_u$, it must be the whole of $\bbT$ by minimality.  This implies that $\dim C \ge \dim \bbT = 1$, and so Proposition~\ref{prop:Mar} gives
\[\dim C_t +1 \le \dim C  \quad \hbox{for}\ m\hbox{-a.e.}\ t.\]
Combining the properties above, we find that a.e. $t$ is a witness to the inequality
\begin{equation}\label{eq:Yu1}
d +1 \le \dim C.
\end{equation}

Next, consider the map
\[(\s, \pi_2):\bbT\times \bbT\to \bbT\times \bbT : (x,y)\mapsto (x+y,y).\]
This is bi-Lipschitz, so it preserves the dimensions of subsets, and we obtain
\begin{equation}\label{eq:Yu2}
\dim C = \dim \big((\s, \pi_2)[C]\big) \le \dim (\s[C]\times \pi_2[C]) \le \dim \s[C] + d,
\end{equation}
where the last inequality is a standard bound for the Hausdorff dimension of a product~\cite[Product formula 7.3]{Fal14}, combined with the fact that $\ol{\dim}_{\mathrm{B}} \pi_2[C] = d$.  Concatening the inequalities~\eqref{eq:Yu1} and~\eqref{eq:Yu2}, the terms involving $d$ cancel to leave $\dim \s[C] \ge 1$.
\end{proof}

\begin{remark}
The set $\s[C]$ may miss a large portion of the circle $\bbT$, even though it has full dimension.  Indeed, if $\langle u_n\rangle$ is any sequence in $\bbT$, then $v \in \bbT$ may be chosen one decimal digit at a time so that the number $u_n + 10^n v$ lies in $[0,2/10]$ modulo $1$ for every $n$.  Then the closure of the set $\{u_n + 10^n v:\ n \in \bbN_0\}$ is entirely contained in this small interval.  I do not know whether the closure of that set can always be made to have Lebesgue measure zero.
\end{remark}

\section{Proof of Theorem A}\label{sec:A}

We now return to the setting of the Introduction.

\subsection{Furstenberg's auxiliary transformation}

Let $u := \log a/\log b$ as before.  This is irrational since $a$ and $b$ are multiplicatively independent.  Ergodic theory enters the proof of Theorem A because of the following specific transformation.  On the space $X = C\times \bbT$, we define
\[T(x,y,t) := \left\{\begin{array}{ll}(S_ax,S_by,R_{1-u} t) &\quad \hbox{if}\ 0 \le t < u\\
(S_ax,y,R_{1-u} t) &\quad \hbox{if}\ u \le t < 1. \end{array}\right.\]
This is a skew product over the irrational circle rotation $R_{1-u}$.  Upon iteration, it yields
\[T^n(x,y,t) = \big(S^n_ax,S^{m(t,n)}_by,R_{n(1-u)} t\big),\]
where $m(t,n)$ is the number among the points
\[t,\ t+(1-u),\ t + 2(1-u),\ \dots,\ t+(n-1)(1-u)\]
that lie in the interval $[0,u)$ mod $1$.  In the sequel we need the estimate
\begin{equation}\label{eq:sum-est}
|m(t,n) - u n| \le 1 \quad \forall t,n,
\end{equation}
which is~\cite[Lemma 7]{Fur70}.  These constructs are discussed further in~\cite[Section 5]{Wu19}.

Given $z \in \bbR^2$ and $t \in \bbR$, let $L_{z,t}$ be the planar line that has slope $b^t$ and passes through $z$.  The next theorem is a corollary of~\cite[Theorem 9]{Fur70}, which is the technical heart of that paper.  It is a kind of analog of Lemma~\ref{lem:good-meas} for the purposes of this section, but it is much more substantial than that lemma.

\begin{theorem}\label{thm:Fur}
If any planar line, not parallel to either coordinate axis, intersects $C$ in dimension at least $\g$, then there is a probability measure $\ol{\mu}$ on $X$ which is invariant and ergodic for $T$ and such that
\[\dim (C\cap L_{z,t}) \ge \g \quad \hbox{for $\ol{\mu}$-a.e.}\ (z,t).\]
\end{theorem}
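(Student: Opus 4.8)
The plan is to read the statement off from Theorem~9 of~\cite{Fur70}, the technical core of that paper, after translating between its language of affine intersections and the line-slice language used here, and then to pass to an ergodic component. First, a word on why $T$ is the relevant object. Iterating $T$ applies $S_a^n$ to the first coordinate and $S_b^{m(t,n)}$ to the second, which magnifies the unit square by the pair of factors $(a^n,b^{m(t,n)})$; by~\eqref{eq:sum-est} the ratio $b^{m(t,n)}/a^n$ stays between $1$ and $b$, so this is a nearly-square magnification that distorts neither Hausdorff dimension nor the adic cell bookkeeping of Lemma~\ref{lem:fromHoc2}. Under such a magnification $C$ maps into itself, by forward $S_a$- and $S_b$-invariance, while the line through $z$ of slope $b^t$ is carried onto the line through $z'$ of slope $b^{t'}$, where $(z',t')$ is read off from $T(z,t)$: keeping track of the fractional part of the base-$b$ logarithm of the slope is exactly why the base rotation is $R_{1-u}$, and~\eqref{eq:sum-est} is what keeps the integer part bounded. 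Thus a $T$-invariant measure on $X$ is precisely a statistical fixed point of the magnification acting on line-slices of $C$, and Theorem~9 of~\cite{Fur70} is the assertion that, starting from one line meeting $C$ in dimension at least $\g$, such a fixed point exists whose fibre measures still witness intersection dimension at least $\g$.

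In detail: since $\dim(C\cap L)\ge\g$, the intersection $C\cap L$ is nonempty, so after the standard normalizations we may take the line in the form $L_{z_0,t_0}$ with $z_0\in C$ and $t_0\in[0,1)$, and rewriting this slice in coordinates turns the dimension bound into the hypothesis of~\cite[Theorem~9]{Fur70} --- the same translation used to identify Theorem~A with Furstenberg's original formulation. That theorem delivers $\ol{\mu}'$ as a Ces\`aro weak-$*$ limit: one takes a Frostman measure on $C\cap L_{z_0,t_0}$, pushes it through the magnification maps, lifts to $X$, and averages. Such a limit is carried by $X$, since $C$ is closed and $S_a\times S_b$-invariant, and it is genuinely $T$-invariant, since its $t$-marginal is Lebesgue by unique ergodicity of $R_{1-u}$, so the single discontinuity $\{t=u\}$ of $T$ is $\ol{\mu}'$-null. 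Moreover its conditionals on the slices inherit, for $\ol{\mu}'$-a.e.\ $(z,t)$, an adic Frostman bound of exponent $\g$, so Lemma~\ref{lem:fromHoc} gives $\dim(C\cap L_{z,t})\ge\g$ for $\ol{\mu}'$-a.e.\ $(z,t)$. It remains to decompose: if $\ol{\mu}'=\int\ol{\mu}'_\omega\,d\rho(\omega)$ is the ergodic decomposition over $T$, then, since the set $E:=\{(z,t):\dim(C\cap L_{z,t})\ge\g\}$ is Borel, the equality $\ol{\mu}'(E)=1$ forces $\ol{\mu}'_\omega(E)=1$ for $\rho$-a.e.\ $\omega$, and any such $\ol{\mu}:=\ol{\mu}'_\omega$ is the required ergodic measure.

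The genuine difficulty is interior to~\cite[Theorem~9]{Fur70}: a lower bound on the dimension of an intersection is not preserved under weak-$*$ limits in any direct way, since the relevant mass quantities are upper semicontinuous in the unhelpful direction, and Furstenberg circumvents this by passing to averaged statistics of the mass the Frostman measure places on small $a$-adic and $b$-adic cells along the magnification orbit --- where~\eqref{eq:sum-est} is precisely what makes the one- and two-dimensional cell counts commensurate. At the level of the present corollary, the only further points needing attention are the Borel measurability of the set $E$ used in the ergodic decomposition, the check that the limit measure does not charge the discontinuity set of $T$, and the routine dictionary between Furstenberg's affine-intersection formulation and the line-slice language of this section.
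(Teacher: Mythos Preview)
Your proposal is correct and aligns with the paper's own treatment: the paper does not give a proof of this theorem at all, but simply states it as a corollary of~\cite[Theorem~9]{Fur70}, so your approach of reading it off from that result after translating between the affine-intersection and line-slice formulations is exactly what is intended. Your sketch additionally makes explicit the passage to an ergodic component via the ergodic decomposition and the Borel measurability of $E$, which the paper leaves implicit; this is a genuine (if routine) step that Furstenberg's original statement does not supply directly, and it is useful that you flag it.
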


In the remainder of the paper we use the invariant measure $\ol{\mu}$ promised by this theorem to prove that $\g$ is at most $\max\{\dim C,1\} -1$.

The image of $\ol{\mu}$ under the final coordinate projection to $\bbT$ is invariant under the irrational rotation $R_{1-u}$, so it must be Lebesgue measure $m$, by unique ergodicity.  Let
\begin{equation}\label{eq:disint}
\ol{\mu} = \int_\bbT \mu_t \times \delta_t\,dt
\end{equation}
be the disintegration of $\ol{\mu}$ over that projection to $\bbT$, and let $\mu = \int \mu_t\,dt$ be the projection of $\ol{\mu}$ to $C$.

Let $h$ be the Kolmogorov--Sinai entropy of the system $(X,\ol{\mu},T)$, and let $d := h/\log a$.  Then $h$ is also the relative entropy of $(X,\ol{\mu},T)$ over the coordinate projection to $(\bbT,m,R_{1-u})$, since the latter system has entropy zero.

\subsection{Consequences of the Shannon--McMillan--Breiman theorem}

Let ${\S_c := \{0,1,\dots,c-1\}}$ for any positive integer $c$.  Let $\a:X\to \S_a$ be the map such that $\a(x,y,t)$ is the first digit in the $a$-ary expension of $x$, and let $\b:X\to \S_b$ be the analogous map which reports the first digit in the $b$-ary expansion of $y$.  For each integer $n \ge 0$, let $\a_n := \a\circ T^n$ and $\b_n := \b\circ T^n$, and let
\[\a_{[0;n]}(z,t) := (\a_0(z,t),\dots,\a_n(z,t)) \quad \hbox{and} \quad \b_{[0;n]}(z,t) := (\b_0(z,t),\dots,\b_n(z,t)).\]
In this notation, $\a_{[0;n-1]}(x,y,t)$ lists the first $n$ digits in the $a$-ary expansion of $x$.  Similarly, $\b_{[0;n-1]}(x,y,t)$ lists the first $m(t,n)$ digits in the $b$-ary expansion of $y$, but with repetitions so that the output is a sequence of length $n$.  See the discussion preceding~\cite[Lemma 7]{Fur70}.

For each $n\ge 1$, the level sets of the combined map $(\a_{[0;n]},\b_{[0;n]})$ constitute a partition of $X$.  We write $P_n(z,t)$ for the cell of this partition that contains $(z,t)$: that is,
\[P_n(z,t) := \big\{(z',t'):\ \a_{[0;n]}(z',t') = \a_{[0;n]}(z,t)\ \hbox{and}\ \b_{[0;n]}(z',t') = \b_{[0;n]}(z,t)\big\}.\]
Together with the coordinate projection from $(X,\ol{\mu},T)$ to the zero-entropy system $(\bbT,m,R_{1-u})$, these finer and finer partitions $P_n$ generate the whole Borel sigma-algebra of $X$.  Therefore, by the Kolmogorov--Sinai theorem~\cite[Theorem 2.20]{EinLinWar--draft}, the entropy rate of the process $(\a_0,\b_0)$, $(\a_1,\b_1)$, \dots that generates these partitions is equal to the full Kolmogorov--Sinai entropy $h$ of the system ${(X,\ol{\mu},T)}$.  This fact is needed in the proofs of Propositions~\ref{prop:reg1} and~\ref{prop:reg2} below, where the measures of cells of $P_n$ are estimated in terms of $h$ using the Shannon--McMillan--Breiman theorem.

The next lemma compares the cells $P_n$ with other, simpler subsets of $X$.  To formulate it, we endow $\bbR^2$ with the max-metric, and let $B(z,r)$ denote the open ball of radius $r$ around $z \in \bbR^2$ in this metric.

\begin{lemma}\label{lem:ball-in-ball}
There is a fixed positive integer $c$ such that
\[Q_{n+c}(x,y) \times \{t\} \subset P_n(x,y,t) \subset B((x,y),ba^{-n})\times \bbT\]
for all $(x,y) \in C$, $t \in \bbT$, and $n \in \bbN$, where $Q_n$ is the notation from~\eqref{eq:Dn}.
\end{lemma}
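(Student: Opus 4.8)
The plan is to unwind both inclusions into statements about $a$-adic and $b$-adic intervals and then use~\eqref{eq:sum-est} to absorb the resulting bounded errors. First I would set up the dictionary between the symbolic coordinates defining $P_n$ and digit intervals. On one side, $\a_{[0;n]}(x,y,t)$ is exactly the list of the first $n+1$ base-$a$ digits of $x$, so $\a_{[0;n]}(x',y',t')=\a_{[0;n]}(x,y,t)$ is equivalent to $I_{n+1}(x')=I_{n+1}(x)$. On the other side, since the second coordinate of $T^j(x,y,t)$ is $S_b^{m(t,j)}y$, the observable $\b_j=\b\circ T^j$ reports the base-$b$ digit of $y$ in position $m(t,j)+1$; thus the string $\b_{[0;n]}(x,y,t)$ reads off the base-$b$ digits of $y$ in order, repeating a digit exactly at the steps $j$ at which the counting function $j\mapsto m(t,j)$ does not advance. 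I would also record that~\eqref{eq:sum-est} gives $un-1\le m(t,n)\le un+1$ for all $t$ and $n$.

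For the left inclusion I would fix a positive integer $c$ with $uc\ge 3$, so that $\lfloor u(n+c)\rfloor\ge m(t,n)+1$ for every $n$. Given $(x',y')\in Q_{n+c}(x,y)=I_{n+c}(x)\times J_{\lfloor u(n+c)\rfloor}(y)$, I would take $t':=t$: then $x'$ and $x$ lie in a common $a$-adic interval of depth $n+c\ge n+1$, while $y'$ and $y$ lie in a common $b$-adic interval of depth at least $m(t,n)+1$, and — because $t'=t$ leaves $m(t,\cdot)$ unchanged — the values of $\a_{[0;n]}$ and of $\b_{[0;n]}$ at $(x',y',t)$ agree with their values at $(x,y,t)$; hence $(x',y',t)\in P_n(x,y,t)$. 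For the right inclusion, take $(x',y',t')\in P_n(x,y,t)$. Equality of $\a_{[0;n]}$ forces $I_{n+1}(x')=I_{n+1}(x)$, so $|x-x'|<a^{-n-1}<ba^{-n}$. Equality of $\b_{[0;n]}$ should force $J_{m(t,n)}(y')=J_{m(t,n)}(y)$ (this is the step that needs care), and then $m(t,n)\ge un-1$ gives $|y-y'|<b^{-m(t,n)}\le b^{1-un}=ba^{-n}$ — the factor $b$ in the statement being exactly the slack that swallows the ``$-1$'' in~\eqref{eq:sum-est}. Since $\bbR^2$ carries the max-metric, these two bounds together say $(x',y')\in B((x,y),ba^{-n})$, as required.

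The main obstacle — the one step above that is not routine interval arithmetic — is to justify that the cell $P_n(x,y,t)$ determines the $b$-adic interval $J_{m(t,n)}(y)$ of its $y$-coordinate; equivalently, that the string $(y_{m(t,0)+1},\dots,y_{m(t,n)+1})$ output by $\b_{[0;n]}$ can be decoded back into the honest prefix $y_1\cdots y_{m(t,n)}$ of the base-$b$ expansion of $y$. The difficulty is that the repetition pattern of that string is controlled by the $\bbT$-coordinate, through the itinerary of $t$ under $R_{1-u}$, and two distinct itineraries can a priori produce the same output string; so the decoding goes through only because the cells $P_n$ also retain enough of that itinerary to separate the ``fresh-digit'' steps from the ``repeat'' steps, and at this point I would go back to the precise definition of the partitions $P_n$ and again to~\eqref{eq:sum-est}. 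Granting that, the remainder is the interval bookkeeping above together with the definition~\eqref{eq:Dn} of $Q_n$.
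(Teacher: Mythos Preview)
Your argument for the first inclusion is exactly the paper's: choose $c$ so that $\lfloor u(n+c)\rfloor$ safely exceeds $m(t,n)$ via~\eqref{eq:sum-est} (the paper takes $uc\ge 2$, you take $uc\ge 3$), and then matching enough $a$- and $b$-adic digits forces equality of $\a_{[0;n]}$ and $\b_{[0;n]}$ at the fixed third coordinate $t'=t$.

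For the second inclusion you have put your finger on a genuine difficulty that the paper's one-sentence proof does not address. The paper simply writes ``observe from~\eqref{eq:sum-est} that \dots $y',y$ must lie within distance $b^{-un+1}$''; you correctly note that the repetition pattern in $\b_{[0;n]}(x',y',t')$ is governed by the itinerary of $t'$ under $R_{1-u}$, not of $t$, so matching $\b$-strings need not force matching $b$-adic prefixes of $y$ and $y'$. However, your proposed resolution --- that the cell $P_n$ ``retains enough of that itinerary to separate the fresh-digit steps from the repeat steps'' --- cannot work: $P_n$ is \emph{by definition} a level set of the pair $(\a_{[0;n]},\b_{[0;n]})$ alone and carries no further information about the $\bbT$-coordinate, so going back to that definition will not reveal the missing itinerary. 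The concern is not hypothetical. With $B=[0,1]$ one may take $t\in[0,u)$ and $t'\in[u,1)$ so that the staircases $m(t,\cdot)$ and $m(t',\cdot)$ differ by one along long stretches, and then choose digits so that $\b_{[0;n]}(\cdot,y,t)=\b_{[0;n]}(\cdot,y',t')$ for arbitrarily large $n$ while $y$ and $y'$ already disagree in their second base-$b$ digit; thus the stated inclusion $P_n(x,y,t)\subset B((x,y),ba^{-n})\times\bbT$ fails. What your decoding argument \emph{does} establish --- once $t'=t$ fixes the repetition pattern --- is the fibrewise inclusion $P_n(x,y,t)\cap(C\times\{t\})\subset B((x,y),a^{-n})\times\{t\}$, and it is this fibrewise form on which the subsequent Shannon--McMillan--Breiman estimates can be made to rest.
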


\begin{proof}
Choose $c$ large enough that $u c \ge 2$, and suppose that $(x',y') \in Q_{n+c}(x,y)$.  Then $x'$ and $x$ agree in the first $n+c$ digits of their $a$-adic expansions, and $y'$ and $y$ agree in the first $m$ digits of their $b$-adic expansions, where $m = \lfloor u(n+c)\rfloor$.  By our choice of $c$, this satisfies
\[m = \lfloor u n + u c\rfloor \ge \lfloor u n\rfloor + 2 \ge u n + 1.\]
In view of~\eqref{eq:sum-est}, this implies that $m\ge m(t,n)$, and so $\a_{[0;n]}$ and $\b_{[0;n]}$ both agree on the two inputs $(x,y,t)$ and $(x',y',t)$.   This proves the first inclusion.

For the second inclusion, observe from~\eqref{eq:sum-est} that if $(x',y',t') \in P_n(x,y,t)$ then $x'$, $x$ must lie within distance $a^{-n}$ and $y'$, $y$ must lie within distance $b^{-u n + 1} = ba^{-n}$.  Hence $(x',y')$ must lie within distance $ba^{-n}$ of $(x,y)$.  This proves the second inclusion.
\end{proof}

We are now ready to derive estimates on the local behaviour of various measures on $C$: first the projection $\mu$, and then the disintegrand $\mu_t$ in~\eqref{eq:disint} for typical values of $t$.

\begin{proposition}\label{prop:reg1}
The measure $\mu$ has the property that
\begin{equation}\label{eq:bigballs}
\mu B(z,r) \ge r^{d + o(1)} \quad \hbox{as}\ r\downarrow 0\quad \hbox{for}\ \mu\hbox{-a.e.}\ z,
\end{equation}
where the rate implied by the notation $o(1)$ may depend on the point $z$.
\end{proposition}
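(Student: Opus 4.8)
The plan is to deduce \eqref{eq:bigballs} from the Shannon--McMillan--Breiman theorem applied to the partitions $P_n$, using Lemma~\ref{lem:ball-in-ball} to pass from cells $P_n$ to genuine balls. The key observation is that $\mu$ is the projection to $C$ of the $T$-invariant, ergodic measure $\ol\mu$, and the process $(\a_0,\b_0), (\a_1,\b_1),\dots$ generating the partitions $P_n$ has entropy rate exactly $h$ under $\ol\mu$, as recorded in the discussion preceding the Proposition. First I would invoke the Shannon--McMillan--Breiman theorem for the (stationary, ergodic) process generated by $(\a_0,\b_0)$ on $(X,\ol\mu,T)$: this gives
\[\ol\mu\, P_n(z,t) = e^{-hn + o(n)} = a^{-dn + o(n)} \quad \hbox{as}\ n\to\infty\]
for $\ol\mu$-a.e. $(z,t)$, where the rate may depend on the point. (One caveat I would address: $T$ need not be invertible, so I would use the one-sided SMB theorem, which applies since the $P_n$ are the cells of the $n$-th refinement of a single partition under forward iteration.)

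Next I would translate this into a statement about $\mu$ and about balls. Since $\mu = \int \mu_t\,dt$ is the pushforward of $\ol\mu$ under the projection $(z,t)\mapsto z$, and since $P_n(z,t) \subset B(z, ba^{-n})\times\bbT$ by the second inclusion in Lemma~\ref{lem:ball-in-ball}, for $\ol\mu$-a.e. $(z,t)$ we get
\[\mu B(z, ba^{-n}) \ge \ol\mu\big(P_n(z,t)\big) = a^{-dn+o(n)}.\]
Because the projection is measure-preserving from $\ol\mu$ to $\mu$, the set of $z$ for which this holds for \emph{some} (equivalently, a.e.) $t$ has full $\mu$-measure. Setting $r = ba^{-n}$ and letting $n\to\infty$, and interpolating over the intermediate radii $r\in[ba^{-n-1},ba^{-n})$ using monotonicity of $r\mapsto \mu B(z,r)$ (which costs only a bounded factor, absorbed into the $o(1)$), yields $\mu B(z,r)\ge r^{d+o(1)}$ as $r\downarrow 0$ for $\mu$-a.e. $z$, which is exactly \eqref{eq:bigballs}.

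I expect the main obstacle to be purely a bookkeeping matter rather than a conceptual one: being careful that the $o(n)$ rate in SMB, which genuinely depends on the point, survives the pushforward from $\ol\mu$ to $\mu$ and the change of variable $r = ba^{-n}$ without becoming a uniform error, and confirming that the one-sided SMB theorem is legitimately available for this non-invertible skew product. The inclusion-based passage from cells to balls is immediate from Lemma~\ref{lem:ball-in-ball}, and since we only need a \emph{lower} bound on $\mu B(z,r)$, the direction of the inclusion $P_n(z,t)\subset B(z,ba^{-n})\times\bbT$ is exactly the one we want; no matching upper bound on cell sizes is required here.
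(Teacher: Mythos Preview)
Your proposal is correct and follows essentially the same route as the paper: apply the Shannon--McMillan--Breiman theorem to the partitions $P_n$ under $\ol\mu$, then use the second inclusion of Lemma~\ref{lem:ball-in-ball} to bound $\mu B(z,r) = \ol\mu(B(z,r)\times\bbT)$ from below by $\ol\mu P_n(z,t)$ for the appropriate $n$. Your extra care about the one-sided SMB theorem, the pushforward from $\ol\mu$ to $\mu$, and the interpolation over radii are all sound and just make explicit what the paper leaves implicit.
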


\begin{proof}
Equivalently, we must show that
\[\ol{\mu}\big(B(z,r)\times \bbT\big) \ge r^{d + o(1)} \quad \hbox{as}\ r\downarrow 0\quad \hbox{for}\ \ol{\mu}\hbox{-a.e.}\ (z,t).\]
By the second inclusion of Lemma~\ref{lem:ball-in-ball}, the measure on the left is bounded below by $\ol{\mu}P_n(z,t)$, where $n$ is the smallest integer for which $ba^{-n}\le r$. By the Shannon--McMillan--Breiman theorem, we have
\[\ol{\mu}P_n(z,t) = e^{-hn - o(n)} = a^{-dn - o(n)} = r^{d + o(1)} \quad \hbox{as}\ n\to\infty\]
for $\ol{\mu}$-a.e. $(z,t)$.
\end{proof}

\begin{corollary}\label{cor:reg1}
There is a Borel subset $C_0$ of $C$ such that $\mu C_0 = 1$ and $\dim  C_0 \le d$.
\end{corollary}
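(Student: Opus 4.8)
The plan is to deduce this immediately from Proposition~\ref{prop:reg1} by a standard Vitali covering estimate --- essentially the elementary upper-bound half of the mass distribution principle (Billingsley's lemma): a finite Borel measure whose lower pointwise dimension is at most $d$ almost everywhere must give full mass to a Borel set of Hausdorff dimension at most $d$.

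First I would fix $\ep > 0$ and, for each positive integer $k$, form the set
\[E_{k,\ep} := \big\{z \in C:\ \mu B(z,r) \ge r^{d+\ep}\ \hbox{for all}\ 0 < r \le 1/k\big\}.\]
Since $z\mapsto \mu B(z,r)$ is lower semicontinuous (as $\mu$ is finite and $B(z,r)$ is open) and $r\mapsto \mu B(z,r)$ is left-continuous, the condition defining $E_{k,\ep}$ amounts to a countable conjunction of Borel conditions, so $E_{k,\ep}$ is Borel. From Proposition~\ref{prop:reg1} one sees that, for $\mu$-a.e.\ $z$ and every $\ep > 0$, the inequality $\mu B(z,r) \ge r^{d+\ep}$ holds for all sufficiently small $r$; hence $\mu\big(\bigcup_k E_{k,\ep}\big) = 1$ for each $\ep$.

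Next I would bound the Hausdorff measure of each $E_{k,\ep}$. Fixing $0 < \delta \le 1/k$, the balls $B(z,r)$ with $z \in E_{k,\ep}$ and $0 < r \le \delta$ cover $E_{k,\ep}$, so the basic $5r$-covering lemma produces a countable disjoint subfamily $\langle B(z_i,r_i)\rangle$ such that $\langle B(z_i,5r_i)\rangle$ still covers $E_{k,\ep}$. Using the disjointness of the $B(z_i,r_i)$, the inequalities $\mu B(z_i,r_i) \ge r_i^{d+\ep}$ (valid since $z_i \in E_{k,\ep}$ and $r_i \le 1/k$), and the fact that $\mu$ is a probability measure,
\[\sum_i \big(\mathrm{diam}\,B(z_i,5r_i)\big)^{d+\ep} = 10^{d+\ep}\sum_i r_i^{d+\ep} \le 10^{d+\ep}\sum_i \mu B(z_i,r_i) \le 10^{d+\ep}.\]
As every covering set here has diameter at most $10\delta$, letting $\delta\downarrow 0$ gives $m_{d+\ep}(E_{k,\ep}) \le 10^{d+\ep} < \infty$, hence $\dim E_{k,\ep} \le d+\ep$; by countable stability of Hausdorff dimension, $\dim\big(\bigcup_k E_{k,\ep}\big) \le d+\ep$.

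Finally I would set $C_0 := \bigcap_{m\ge 1}\bigcup_{k\ge 1}E_{k,1/m}$. This is a Borel subset of $C$ with $\mu C_0 = 1$ (a countable intersection of the full-measure sets from the previous paragraph), and for every $m$ it satisfies $\dim C_0 \le \dim\big(\bigcup_k E_{k,1/m}\big) \le d + 1/m$; letting $m\to\infty$ gives $\dim C_0 \le d$, as required. I do not expect any genuine obstacle here: the argument is routine, and the only points calling for a little care are the Borel measurability of the sets $E_{k,\ep}$ and the bookkeeping with the two small parameters $\ep$ and $1/k$. (Alternatively one could run the same estimate using the adic cells $Q_n(z)$ from~\eqref{eq:Dn} --- which nest and have uniformly bounded aspect ratios --- in place of metric balls, invoking Lemma~\ref{lem:ball-in-ball}, but the covering argument with balls seems cleanest.)
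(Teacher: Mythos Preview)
Your proof is correct and follows essentially the same route as the paper's: both take $C_0$ to be the Borel set of points where the lower local dimension condition~\eqref{eq:bigballs} holds, and then invoke the standard covering-lemma argument to conclude $\dim C_0\le d$. The only difference is cosmetic: the paper cites this last step as a black box (\cite[Proposition~4.9(b)]{Fal14} or \cite[Proposition~6.24]{Hoc--notes}), whereas you unpack it explicitly via the $5r$-covering lemma and the two-parameter bookkeeping with $E_{k,\ep}$.
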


\begin{proof}
By Proposition~\ref{prop:reg1}, there is a Borel set $C_0$ such that~\eqref{eq:bigballs} holds for every $z \in C_0$.  Now a standard covering argument shows that $m_s C_0 = 0$ for all $s > d$, so $\dim C_0 \le d$: see, for instance,~\cite[Proposition 4.9(b)]{Fal14} or~\cite[Proposition 6.24]{Hoc--notes}.
\end{proof}

Next we turn to the disintegrands in~\eqref{eq:disint}.

\begin{proposition}\label{prop:reg2}
For $m$-a.e. $t$, the measure $\mu_t$ has the property that
\[\mu_t B(z,r) \le r^{d - o(1)} \quad \hbox{as}\ r\downarrow 0 \quad \hbox{for}\ \mu_t\hbox{-a.e.}\ z,\]
where the rate implied by the notation $o(1)$ may depend on the pair $(z,t)$.
\end{proposition}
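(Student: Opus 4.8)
The plan is to mirror the proof of Proposition~\ref{prop:reg1}, but now working fibrewise over the circle rotation and passing from the adic cells $P_n$ to the genuine dyadic-type rectangles $Q_n$ via Lemma~\ref{lem:ball-in-ball}. The key point is that the first inclusion of Lemma~\ref{lem:ball-in-ball} gives $Q_{n+c}(x,y)\times\{t\}\subset P_n(x,y,t)$, so an \emph{upper} bound on $\ol\mu P_n(z,t)$ yields an upper bound on $\ol\mu(Q_{n+c}(z)\times\bbT)$. Since $\ol\mu = \int \mu_t\times\delta_t\,dt$, the measure of $Q_{n+c}(z)\times\bbT$ is not directly the fibre measure $\mu_t Q_{n+c}(z)$; rather, $\ol\mu(Q_{n+c}(z)\times\bbT) = \int \mu_s Q_{n+c}(z)\,ds$. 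So the argument must be phrased as a statement about $\ol\mu$ (equivalently $\mu$) first, and then descend to the disintegrands by an integration/Borel--Cantelli device.

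Concretely, I would proceed as follows. First, invoke the relative Shannon--McMillan--Breiman theorem exactly as in~\eqref{eq:relSMB1}: because the process $(\a_0,\b_0),(\a_1,\b_1),\dots$ generates the full Kolmogorov--Sinai entropy $h$ of $(X,\ol\mu,T)$ over the zero-entropy factor $(\bbT,m,R_{1-u})$, and because that factor is a circle rotation hence invertible, the relative SMB theorem applies and gives
\[
\ol\mu P_n(z,t) = e^{-hn+o(n)} = a^{-dn+o(n)} \qquad\text{as } n\to\infty,
\]
for $\ol\mu$-a.e.\ $(z,t)$, with the $o(n)$ rate depending on $(z,t)$. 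In fact, by the relative SMB theorem the convergence $-\tfrac1n\log\ol\mu P_n(z,t)\to h$ holds for $m$-a.e.\ $t$ and then $\mu_t$-a.e.\ $z$. Next, apply the first inclusion of Lemma~\ref{lem:ball-in-ball} to get $\mu_t Q_{n+c}(z) \le \ol\mu P_n(z,t) = a^{-dn+o(n)}$; since $a^{-dn+o(n)} = a^{-d(n+c)+o(n)}$, this says $\mu_t Q_n(z) \le a^{-dn+o(n)}$ for $m$-a.e.\ $t$ and $\mu_t$-a.e.\ $z$. That is exactly condition (b) of Lemma~\ref{lem:fromHoc2} for the measure $\mu_t$ (applied on $[0,1)^2$), and condition (a) of that lemma then gives $\mu_t B(z,r)\le r^{d-o(1)}$ as $r\downarrow 0$ for $\mu_t$-a.e.\ $z$, as required.

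The main subtlety — and the step I would be most careful about — is the claim that the single relative SMB limit $-\tfrac1n\log\ol\mu P_n(z,t)\to h$ holds for $m$-a.e.\ $t$ and $\mu_t$-a.e.\ $z$, rather than merely for $\ol\mu$-a.e.\ $(z,t)$. These are in fact the same assertion by the disintegration $\ol\mu=\int\mu_t\times\delta_t\,dt$ and Fubini applied to the $\ol\mu$-null exceptional set, so no extra work is needed; but one must note that $P_n(z,t)$ as a subset of $X=C\times\bbT$ is a union over a $t$-dependent family, and for fixed $t$ the trace $P_n(z,t)\cap(C\times\{t\})$ equals $Q_{n+c}$-type rectangles only after the inclusion in Lemma~\ref{lem:ball-in-ball} — hence the inequality (not equality) $\mu_t Q_{n+c}(z)\le\ol\mu P_n(z,t)$, which is all we need for the desired upper bound. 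A second minor point is that Lemma~\ref{lem:fromHoc2} is stated for measures on $[0,1)^2$, so one should remark that $\mu_t$ is supported on $C\subset[0,1]^2$ and the boundary causes no difficulty (or pass to $[0,1)^2$ after discarding a null set). With these observations in place the proof is a direct transcription of the Proposition~\ref{prop:reg1} argument with the inclusions of Lemma~\ref{lem:ball-in-ball} used in the opposite direction.
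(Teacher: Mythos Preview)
Your overall strategy is the paper's strategy: reduce via Lemma~\ref{lem:fromHoc2} to an estimate on $\mu_t Q_n(z)$, use the first inclusion of Lemma~\ref{lem:ball-in-ball}, and appeal to the relative Shannon--McMillan--Breiman theorem. However, there is a genuine slip in the key inequality that you should repair.

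You write $\mu_t Q_{n+c}(z) \le \ol\mu P_n(z,t)$, but the inclusion $Q_{n+c}(z)\times\{t\}\subset P_n(z,t)$ does not give this: on the left you are applying the fibre measure $\mu_t\times\delta_t$, on the right the full measure $\ol\mu$, and there is no monotonicity between two different measures. What the inclusion actually yields is
\[
\mu_t Q_{n+c}(z) = (\mu_t\times\delta_t)\big(Q_{n+c}(z)\times\{t\}\big) \le (\mu_t\times\delta_t)\big(P_n(z,t)\big),
\]
and it is precisely the quantity $(\mu_t\times\delta_t)(P_n(z,t))$ whose asymptotics the \emph{relative} SMB theorem controls: for $\ol\mu$-a.e.\ $(z,t)$ one has $(\mu_t\times\delta_t)(P_n(z,t)) = e^{-hn+o(n)}$. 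This is exactly how the paper proceeds. Your displayed asymptotic $\ol\mu P_n(z,t)=e^{-hn+o(n)}$ is also true, but it comes from the ordinary (non-relative) SMB theorem and is not the statement you need here; conflating the two is what leads you in the first paragraph to worry about an unnecessary ``Borel--Cantelli device'' for descending from $\ol\mu$ to $\mu_t$. No such descent is required once you work with the conditional measures from the start. With $\ol\mu$ replaced by $\mu_t\times\delta_t$ in the relevant line, your argument is correct and coincides with the paper's proof.
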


\begin{proof}
By Lemma~\ref{lem:fromHoc2}, it is equivalent to show that
\[\mu_t Q_n(z) \le a^{-dn + o(n)} \quad \hbox{as}\ n\to\infty \quad \hbox{for}\ \mu_t\hbox{-a.e.}\ z.\]
This fact has a similar proof to Proposition~\ref{prop:reg1}, except that now we must use the relative Shannon--McMillan--Breiman theorem~\cite[Theorem 3.2]{EinLinWar--draft}.  Indeed, the first inclusion of Lemma~\ref{lem:ball-in-ball} gives
\[\mu_t Q_n(z) = (\mu_t\times \delta_t)(Q_n(z)\times \{t\}) \le (\mu_t\times \delta_t)(P_{n-c}(z,t))\]
for all sufficiently large $n$.  By the relative Shannon--McMillan--Breiman theorem, this right-hand side behaves as
\[e^{-h(n-c) + o(n)} = e^{-hn + o(n)} = a^{-dn + o(n)} \quad \hbox{as}\ n\to\infty\]
for $m$-a.e. $t$ and then for $\mu_t$-a.e. $z$.  (As in Section~\ref{sec:Yu}, we can apply~\cite[Theorem 3.2]{EinLinWar--draft} because the factor system is a circle rotation, hence invertible.)
\end{proof}

\subsection{Completion of the proof}\label{subs:A}

\begin{proof}[Completed proof of the Theorem A]
\emph{Step 1.}\quad From the product set $C_0\times C$, we construct the following new sets:
\begin{itemize}
\item For each $t \in [0,1)$, let $D_t := \big\{(z,w) \in C_0\times C:\ w \in L_{t,z}\setminus \{z\}\big\}$.
\item Let
\[D := \big\{(z,w) \in C_0\times C:\ w \in L_{z,t}\setminus\{z\}\ \hbox{for some}\ t \in [0,1)\big\} = \bigcup_{t \in [0,1)} D_t.\]
In prose, this is the set of pairs $(z,w)$ of distinct points in $C$ such that (i) $z$ lies in the subset $C_0$ and (ii) the segment from $z$ to $w$ has positive slope lying in $[1,b)$.
\item Finally, let
\[\ol{D} := \big\{(z,w,t) \in C_0\times C\times [0,1):\ w \in L_{z,t}\setminus\{z\}\big\}.\]
This is similar to $D$, except it records explicitly the slope between $z$ and $w$.
\end{itemize}
The next few steps of the proof estimate and relate the dimensions of these sets.

\vspace{7pt}

\emph{Step 2.} \quad By Furstenberg's calculation for multiplication-invariant sets in~\cite[Proposition III.1]{Fur67}, both $A$ and $B$ are exact dimensional: that is, each has equal Hausdorff and upper and lower box dimensions.  Using this fact, standard formulae for dimensions of products~\cite[Product formulae 7.2, 7.3]{Fal14} give (i) that $C$ is also exact dimensional, and then (ii) these estimates:
\begin{equation}\label{eq:dimD}
\dim  D \le \dim (C_0\times C) = \dim  C_0 + \dim  C \le d + \dim  C.
\end{equation}

\vspace{7pt}

\emph{Step 3.}\quad The slope between $z$ and $w$ is a Lipschitz function of the pair $(z,w)$ when we restrict to any set of the form $\{(z,w) \in D:\ |z-w| \ge 1/n\}$, $n\in\bfN$.  Therefore $\ol{D}$ is a countable union of Lipschitz images of subsets of $D$, and so $\dim \ol{D} \le \dim D$.  The reverse inequality here is immediate, so in fact $\dim \ol{D} = \dim D$.

\vspace{7pt}

\emph{Step 4.}\quad Applying Proposition~\ref{prop:Mar} to the set $\ol{D}$ and the projection $(z,w,t)\mapsto t$, we obtain
\begin{equation}\label{eq:dim-drop}
\dim  D_t + 1 \le \max\{\dim \ol{D},1\} = \max\{\dim D,1\} \quad \hbox{for}\ m\hbox{-a.e.}\ t.
\end{equation}
Combining this with our previous results, it follows that $m$-a.e. $t$ satisfies~\eqref{eq:dim-drop} and also:
\begin{itemize}
\item[i.] $\dim(C \cap L_{z,t}) \ge \g$ for $\mu_t$-a.e. $z$ (by Theorem~\ref{thm:Fur});
\item[ii.] $\mu_t B(z,r) \le r^{d - o(1)}$ as $r\downarrow 0$ for $\mu_t$-a.e. $z$ (by Proposition~\ref{prop:reg2});
\item[iii.] $\mu_t C_0 = 1$, where $C_0$ is the set provided by Corollary~\ref{cor:reg1} (in view of the disintegration~\eqref{eq:disint} and the fact that $\mu C_0 = 1$).
\end{itemize}
Fix such a value of $t$ for the rest of the proof.

\vspace{7pt}

\emph{Step 5.}\quad Consider the restricted coordinate projection
\[\pi:D_t\to C_0:(z,w)\mapsto z.\]
By property (iii) above, the measure $\mu_t$ is supported by the target $C_0$ of this map.
Under this map, the pre-image $\pi^{-1}\{z\}$ is precisely the set $\{z\}\times ((C\cap L_{z,t})\setminus \{z\})$, whose dimension is equal to $\dim (C\cap L_{z,t})$.  Therefore properties (i) and (ii) and another appeal to Proposition~\ref{prop:Mar} give
\begin{equation}\label{eq:dim-intersection}
\g + d \le \dim (C\cap L_{z,t}) + d  \le \max\{\dim  D_t,d\} \quad \hbox{for}\ \mu_t\hbox{-a.e.}\ z.
\end{equation}

If $\dim D_t \le d$, then~\eqref{eq:dim-intersection} implies at once that $\g = 0$, which complies with Theorem A.  On the other hand, if $\dim D_t > d$, then the right-hand side in~\eqref{eq:dim-intersection} equals $\dim D_t$.  In this case we concatenate inequalities~\eqref{eq:dimD},~\eqref{eq:dim-drop} and~\eqref{eq:dim-intersection} to obtain
\[\g + d + 1 \le \max\{d + \dim C,1\}.\]
If this maximum is equal to $1$, then $\g = d = 0$, and otherwise we can cancel $d$ to conclude that $\g \le \dim C - 1$.
\end{proof}

\section*{Acknowledgements}

I am grateful to Adam Lott for reading an early version of this manuscript and pointing out several corrections.

\medskip
\medskip

\end{document}